\newtheorem{thm}{Theorem}
\newtheorem{corollary}{Corollary}
\newtheorem{conj}{Conjecture}
\newtheorem{proposition}{Proposition}
\newcommand{\Aut}{\operatorname{Aut}}
\newcommand{\Jac}{\operatorname{Jac}}
\begin{document}
	\baselineskip=16.3pt
	\parskip=14pt
	\begin{center}
		\section*{Divisibility of L-Polynomials for a Family of Curves}

		{\large
		Ivan  Blanco--Chac\'on\\
		School of Mathematics and Statistics, University College Dublin, Ireland \\ \bigskip
		Robin Chapman\\
Department of Mathematics, University of Exeter, Exeter, EX4~4QE, UK \\ \bigskip
Stiof\'ain Fordham\\
School of Mathematics and Statistics, University College Dublin, Ireland \\ \bigskip
Gary McGuire\footnote{email gary.mcguire@ucd.ie. This paper appeared in the proceedings of Fq12, the 12th International Conference on Finite Fields and their Applications, 2015.} \\
School of Mathematics and Statistics, University College Dublin, Ireland}

	\end{center}
	\subsection*{Abstract}
	We consider the question of
when the L-polynomial of
one curve divides the L-polynomial of another curve.
A theorem of Tate gives an answer in terms of jacobians.
We consider the question in terms of the curves.
The last author gave an invited talk at the 12th International Conference on Finite Fields and Their Applications on this topic, and stated two conjectures.
In this article we prove one of those conjectures.

\section{Introduction} \label{chapman:intro}

Let $p$ be a prime and $q=p^f$ where $f$ is a positive integer and let $\mathbb{F}_q$ denote the finite field of order $q$. Let $X$ be a smooth projective variety over $\mathbb{F}_q$, of dimension $d$. Let $\overline X=X(\overline{\mathbb{F}}_q)$ be the corresponding variety over the algebraic closure of $\mathbb{F}_q$ and let $F\colon \overline X\rightarrow \overline X$ be the Frobenius morphism. The zeta function $\mathrm{Z}_X(t)$ of $X$ is defined by
\[
\log \mathrm{Z}_X(t)= \sum_{m\ge 1} \frac{t^m}{m}N_m,
\]
where $N_m$ is the cardinality of the set $X(\mathbb{F}_{q^m})$: the points of $X$ with values in $\mathbb{F}_{q^m}$. Via the Weil conjectures (proved by Weil, Dwork, Grothendieck and others), one knows that $\mathrm{Z}_X(t)$ is a rational function and may be written in the form
\begin{equation}
\mathrm{Z}_X(t)=\frac{P_1(t)\cdots P_{2d-1}(t)}{P_0(t)\cdots P_{2d}(t)},\label{chapman:weil-conj}
\end{equation}
where each of the $P_i(t)=\det (1-F^\ast t; H^i(\overline X, \mathbb{Q}_\ell))$ are polynomials with coefficients in $\mathbb{Z}$ , where $H^i(\overline X , \mathbb{Q}_\ell)$ is the $i$th $\ell$-adic cohomology ($\ell \ne p$) of $\overline{X}$ with coefficients in $\mathbb{Q}_\ell$ and $F^\ast$ is the map on cohomology induced by $F$.

In the case that $X=C$ is a curve then the zeta function of $C$ has the form
\[
\mathrm{Z}_C(t)=\frac{\mathrm{L}_C(t)}{(1-t)(1-qt)},
\]
and the numerator $\mathrm{L}_C(t)= P_1(t)$ is called the L-polynomial of $C$.

We wish to consider the question of divisibility of L-polynomials.
In previous papers \cite{chapman:AM}, \cite{chapman:AMR},
we have studied conditions under which the L-polynomial of
one curve divides the L-polynomial of another curve.
In this article we discuss two divisibility conjectures for specific families of curves,
and prove one of them.

\section{Two Families of Curves}

A hyperelliptic curve $X$ of genus $g>1$ over 
$\mathbb{F}_q$ is the projective non-singular model of the affine curve
\[
y^2+Q(x)y=P(x),\qquad P(x),Q(x)\in \mathbb{F}_q [x],
\]
where
\[
2g+1\le \max\{2\deg Q(x),\deg P(x)\}\le 2g+2.
\]

\subsection{The $C_k$ Family}

For a positive integer $k$, define the curve $C_k$ over $\mathbb{F}_2$ to be the projective 
non-singular model of the curve with affine equation
\[
y^2+y=x^{2^k+1}+x.
\]
The genus of $C_k$ is $2^{k-1}$, the affine model of $C_1$ is smooth everywhere, and the affine model of $C_k$ for $k>1$ has one singular point at $\infty$.
\begin{conj}\label{chapman:conjck}
The $\mathrm{L}$-polynomial of $C_k$ is divisible by the $\mathrm{L}$-polynomial of $C_1$.
\end{conj}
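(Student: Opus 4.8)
The first step is to make the statement concrete by computing $\mathrm{L}_{C_1}$. Counting points on $C_1\colon y^2+y=x^3+x$ over $\mathbb{F}_2$ gives $\#C_1(\mathbb{F}_2)=5$: both $x=0$ and $x=1$ make $x^3+x=0$, so each contributes two values of $y$, and there is one rational point at infinity. Hence $\mathrm{L}_{C_1}(t)=1+2t+2t^2$, a supersingular elliptic $\mathrm{L}$-polynomial whose reciprocal roots are $-1\pm i=\sqrt2\,\zeta$ with $\zeta$ a primitive eighth root of unity. Since $1+2t+2t^2$ is primitive in $\mathbb{Z}[t]$, by Gauss' lemma Conjecture~\ref{chapman:conjck} is equivalent to divisibility in $\mathbb{Q}[t]$, and since $\mathrm{L}_{C_k}$ has real coefficients this amounts to showing that $\sqrt2\,\zeta$ occurs among the Frobenius eigenvalues of $C_k$.

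To get at those eigenvalues I would pass to exponential sums. The substitution $y\mapsto y+x$ identifies $C_k$ with $y^2+y=x^{2^k+1}+x^2$, and (the curve has a single, rational point at infinity) therefore
\[
\#C_k(\mathbb{F}_{2^m})=2^m+1+S_m,\qquad S_m=\sum_{x\in\mathbb{F}_{2^m}}(-1)^{Q_m(x)},\quad Q_m(x)=\Tr_{\mathbb{F}_{2^m}/\mathbb{F}_2}\!\bigl(x^{2^k+1}+x\bigr),
\]
where $Q_m$ is a quadratic form on $\mathbb{F}_{2^m}\cong\mathbb{F}_2^m$ in the characteristic-two sense. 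Its polar form is that of $\Tr(x^{2^k+1})$, with radical $\{x:x^{2^{2k}}=x\}=\mathbb{F}_{2^{\gcd(2k,m)}}$ of dimension $r_m=\gcd(2k,m)$; by the classical Gauss-sum evaluation for quadratic forms over $\mathbb{F}_2$, $S_m=0$ unless $Q_m$ vanishes on this radical, in which case $S_m=\pm 2^{(m+r_m)/2}$ with the sign given by the Arf invariant (hyperbolic versus elliptic type) of the nondegenerate part. In particular $C_k$ is supersingular, all its Frobenius eigenvalues are $\sqrt2$ times roots of unity, and — since $r_m$ and the type depend only on $m$ modulo a bounded period — these data pin down $\mathrm{L}_{C_k}(t)$ exactly. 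The multiplicity of the eigenvalue $\sqrt2\,\zeta$ can then be read off as the limiting average $\lim_{M\to\infty}\frac1M\sum_{m\le M}(-S_m)\,2^{-m/2}\,\overline{\zeta}^{\,m}$, and the task is to show this is a positive integer.

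This computation has a clean structural underpinning, which is how I would actually organise it, along the lines of van der Geer and van der Vlugt's analysis of Artin--Schreier curves attached to quadratic forms. One checks directly that $C_k$ carries an elementary abelian group of automorphisms
\[
(x,y)\ \longmapsto\ \bigl(x+c,\ y+g_c(x)+d_c\bigr),\qquad g_c(x)=\sum_{i=0}^{k-1}c^{2^{k+i}}x^{2^i},\quad c\in\mathbb{F}_{2^{2k}},
\]
where $g_c$ is the linearised polynomial solving $x^{2^k+1}+x\equiv(x+c)^{2^k+1}+(x+c)\pmod{\wp}$, $\wp(t)=t^2+t$, and $d_c$ is a constant. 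The span of these translations is the Frobenius module $\mathbb{F}_{2^{2k}}\cong\mathbb{F}_2[\varphi]/(\varphi^k-1)^2$; because $2\mid 2k$ (equivalently $\mathbb{F}_4\subseteq\mathbb{F}_{2^{2k}}$, which is what singles out $C_1$) this module has a canonical Galois-stable quotient isomorphic to $\mathbb{F}_4$ — the $(\varphi-1)^2$-part — and the corresponding quotient curve of $C_k$ is an Artin--Schreier cover of $\mathbb{P}^1$ of genus $1$ defined over $\mathbb{F}_2$. Identifying it up to $\mathbb{F}_2$-isomorphism with $C_1$ produces a dominant morphism $C_k\to C_1$, whence $\mathrm{L}_{C_1}\mid\mathrm{L}_{C_k}$ by the theorem of Tate.

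The hard part, in either approach, is precisely this last identification: both routes reduce the conjecture to controlling the \emph{type} (Arf invariant) of the restriction of $\Tr_{\mathbb{F}_{2^m}/\mathbb{F}_2}(x^{2^k+1}+x)$ to the subfields $\mathbb{F}_{2^{r_m}}\subseteq\mathbb{F}_{2^{2k}}$ uniformly in $m$ — equivalently, to proving that the $\mathbb{F}_4$-quotient above is $C_1$ rather than a twist of it or the genus-one curve $y^2+y=x^3$. This is a question about traces and norms between these nested subfields, and is where the substance of the argument lies; the remainder is bookkeeping with the functional equation of $\mathrm{L}_{C_k}$ and Newton's identities.
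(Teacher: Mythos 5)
You have written a plan, not a proof: by your own closing admission, ``the hard part, in either approach, is precisely this last identification,'' and that identification is nowhere carried out. In the exponential-sum route, knowing that $C_k$ is supersingular and that $S_m\in\{0,\pm 2^{(m+r_m)/2}\}$ does not decide the conjecture: everything hinges on the sign (the Arf invariant/type) for each $m$, because the quadratic twist $y^2+y=x^3+x+1$ (L-polynomial $2t^2-2t+1$) and the curve $y^2+y=x^3$ (L-polynomial $2t^2+1$) have eigenvalues of exactly the same shape $\sqrt2\,\times$ (root of unity), and both genuinely occur as factors of $\mathrm{L}_{C_k}$ (see the displayed factorizations for $C_3,\dots,C_6$). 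So ``the task is to show this limiting average is a positive integer'' is just the conjecture restated, and no step of the proposal computes it. The quotient route has the same gap plus unverified intermediate claims: you must check that your index-four Galois-stable subgroup of $\mathbb{F}_{2^{2k}}$ actually lifts to a Galois-stable subgroup of $\Aut(C_k)$ (the constants $d_c$ only exist over extensions, and the lifts of the translations form an extension involving the hyperelliptic involution, so stability of a chosen lift needs an argument), that the quotient curve has genus $1$, and above all that over $\mathbb{F}_2$ it is $C_1$ rather than its twist --- which is precisely the point you defer.

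For comparison, the paper takes your second idea but makes it completely explicit, which is exactly what removes the twist ambiguity. Writing $k=lr$ and $q=2^l$, it exhibits polynomials $f(x)=\sum_{j=0}^{r-1}x^{q^j}$ and an explicit $g(x)$ satisfying $f(x)^{q+1}+f(x)=x^{q^r+1}+x+g(x)^2+g(x)$, so that $(x,y)\mapsto(f(x),\,y+g(x))$ is a non-constant morphism $C_k\to C_l$ defined over $\mathbb{F}_2$ (Theorem \ref{chapman:ckdiv}); since the target is literally $C_l$ --- the equations match on the nose --- no Arf-invariant or rationality question arises, and Theorem \ref{chapman:KleimanSerre} (Kleiman--Serre) immediately gives $\mathrm{L}_{C_1}\mid\mathrm{L}_{C_k}$. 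Note that $f$ is a linearized polynomial, so the paper's map is in effect the quotient by a translation subgroup of the kind you describe; the content you are missing is the verification, by direct computation with a concrete $g$, that this quotient is $C_1$ itself. To repair your argument you would need to supply that computation (or an equivalent determination of the type of the quadratic forms $\Tr(x^{2^k+1}+x)$ for all $m$), and until then the conjecture is not proved.
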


The first six L-polynomials over $\mathbb{F}_2$, 
computed and factored into irreducible factors over $\mathbb{Z}$ using MAGMA
\cite{chapman:magma} are

\begin{align*}
C_1&\colon 2t^2 + 2t + 1\\
C_2&\colon (2t^2 + 2t + 1)(2t^2 + 1)\\
C_3&\colon ( 2t^2 + 2t + 1)(2t^2 - 2t + 1)(4t^4 + 4t^3 + 2t^2 + 2t + 1)\\
C_4&\colon (2t^2 + 2t + 1)^2 (2t^2 - 2t + 1)(2t^2 + 1)(16t^8 + 1)\\
C_5&\colon (2t^2 + 2t + 1)^2  (2t^2 - 2t + 1)^2(16t^8 - 16t^7 + 8t^6 - 4t^4 + 2t^2 - 2t + 1)\\
&\qquad \times (16t^8 + 16t^7 + 8t^6 - 4t^4 + 2t^2 + 2t + 1)^2\\
C_6&\colon 
 (2t^2 - 1)^2 (2t^2 + 1)^4 (4t^4 - 2t^2 + 1)^3  (4t^4 + 2t^2 + 1)^2\\
&\qquad \times (2t^2 - 2t + 1)^3
 (2t^2 + 2t + 1)^3
    (4t^4 - 4t^3 + 2t^2 - 2t + 1)^2\\
    &\qquad \times (4t^4 + 4t^3 + 2t^2 + 2t + 1)^3.
\end{align*}

\subsection{The $E_k$ Family}

\indent For a positive integer $k$, define the curve $E_k$ over $\mathbb{F}_2$ to be the 
projective non-singular model of the curve with affine model
\[
y^2+xy=x^{2^k+3}+x.
\]
The genus of $E_k$ is $2^{k-1}+1$ and similar to above, the affine model of $E_1$ is smooth everywhere, and the affine model of $E_k$ for $k>1$ has one singular point at $\infty$.

\begin{conj}\label{chapman:conjdk}
The $\mathrm{L}$-polynomial of $E_k$ is divisible by the $\mathrm{L}$-polynomial of $E_1$.
\end{conj}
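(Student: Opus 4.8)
The plan is to reduce the divisibility to an inclusion of Frobenius eigenvalue multisets, and then to realise $\Jac(E_1)$ as an $\mathbb{F}_2$-isogeny factor of $\Jac(E_k)$; by the theorem of Tate recalled in the abstract this is equivalent to $\mathrm{L}_{E_1}(t)\mid\mathrm{L}_{E_k}(t)$. We may assume $k\ge2$. The first step is point counting: dividing the affine equation of $E_k$ by $x^2$ and setting $z=y/x$ gives the birational model $z^2+z=x^{2^k+1}+x^{-1}$, and over each $x\in\mathbb{F}_{2^m}^{\times}$ this has two or no values of $z$ according to whether $\Tr_{\mathbb{F}_{2^m}/\mathbb{F}_2}(x^{2^k+1}+x^{-1})$ vanishes, so together with the point $(0,0)$ and the unique (rational) point at infinity one gets
\[
\#E_k(\mathbb{F}_{2^m})=2^m+1+S_k(m),\qquad S_k(m)=\sum_{x\in\mathbb{F}_{2^m}^{\times}}(-1)^{\Tr(x^{2^k+1}+x^{-1})},
\]
whence $\log\mathrm{L}_{E_k}(t)=\sum_{m\ge1}\frac{t^m}{m}S_k(m)$, and likewise for $E_1$ with exponent $3$ in place of $2^k+1$. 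Since $\mathrm{L}_{E_1}(t)=4t^4+2t^3+t+1$ is irreducible over $\mathbb{Q}$, the conjecture becomes the statement that its four reciprocal roots occur among the $2^k+2$ Frobenius eigenvalues of $E_k$.

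There are two natural routes to this inclusion. The geometric route is to produce a non-constant $\mathbb{F}_2$-morphism $\varphi\colon E_k\to E_1$ — or at least a correspondence between them that is non-degenerate on the $\mathrm{L}_{E_1}$-part — so that $\varphi^{\ast}$ embeds $\Jac(E_1)$ into $\Jac(E_k)$ up to isogeny and Tate's theorem applies. Such a $\varphi$ cannot exist over the common $x$-line, since $x^{2^k+1}+x^{3}$ is not of the form $w^2+w$ in $\mathbb{F}_2(x)$ (after the usual reduction it is a nonzero sum of odd-degree monomials, so the two Artin--Schreier classes are $\mathbb{F}_2$-independent); hence $\varphi$ must move the $x$-coordinate, and the thing to try is a substitution $x=\psi(u)$ — a power map or a low-degree rational map — chosen so that the pullback of the class of $E_1$ is absorbed into that of $E_k$, perhaps combined with the translations $x\mapsto x+a$ for $a$ in a subfield of $\mathbb{F}_{2^{k}}$ (suitably twisted on $z$), whose joint action ought to split $\Jac(E_k)$ with $\Jac(E_1)$ as the trivial isotypic summand. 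The analytic route is to evaluate $S_k(m)$ in closed form: $Q(x)=\Tr(x^{2^k+1})$ is an $\mathbb{F}_2$-quadratic form on $\mathbb{F}_{2^m}$ whose radical has dimension $\gcd(k,m)$ or $2\gcd(k,m)$, and $S_k(m)=\sum_{x\ne0}(-1)^{Q(x)}(-1)^{\Tr(x^{-1})}$ combines $Q$ with a Kloosterman-type phase; expanding this through quadratic Gauss sums and the binary Kloosterman sum, doing the same for $S_1(m)$, and matching, should exhibit the four reciprocal roots of $\mathrm{L}_{E_1}$ among the eigenvalues of $E_k$ uniformly in $m$.

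The main obstacle is precisely this middle step, and in either form it is not routine. Geometrically, $E_1$ is \emph{not} a quotient of $E_k$ by automorphisms — the only visible automorphism of $z^2+z=x^{2^k+1}+x^{-1}$ is $z\mapsto z+1$, with quotient $\mathbb{P}^1$ — so the required morphism is non-Galois, and both finding $\psi$ and checking that the resulting map descends to $\mathbb{F}_2$ (controlling any Frobenius twist) are delicate. Analytically, the interaction of the quadratic form $Q$ with the Kloosterman phase is the sticking point: extracting \emph{precisely} the factor $\mathrm{L}_{E_1}$, rather than a larger self-dual sub-factor of $\mathrm{L}_{E_k}$, uniformly in $m$, looks to need Gauss/Jacobi-sum (Stickelberger) input beyond a plain quadratic-form evaluation. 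Once the eigenvalue inclusion is secured the bookkeeping is automatic: $\deg\mathrm{L}_{E_1}=4=2g(E_1)$ and $\deg\mathrm{L}_{E_k}=2^k+2=2g(E_k)$, so the complementary factor has degree $2^k-2$, and a morphism $E_k\to E_1$ of degree $n$ with different divisor $\mathfrak{d}$ satisfying $2n+\deg\mathfrak{d}=2^k$ is then consistent with Riemann--Hurwitz, the extreme case $n=2^{k-1}$ being an étale cover.
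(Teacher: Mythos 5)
There is a genuine gap --- in fact the decisive step is missing entirely, and you say so yourself (``the main obstacle is precisely this middle step''). What you have actually established is only the easy reduction: the point count $\#E_k(\mathbb{F}_{2^m})=2^m+1+S_k(m)$, the identity $\log \mathrm{L}_{E_k}(t)=\sum_m S_k(m)t^m/m$, and the fact that $\mathrm{L}_{E_1}(t)=4t^4+2t^3+t+1$ is irreducible, so that divisibility is equivalent to the four reciprocal roots of $\mathrm{L}_{E_1}$ appearing among the Frobenius eigenvalues of $E_k$. Neither of your two routes to that inclusion is carried out, and the paper gives no proof for you to have rediscovered: this statement is Conjecture~\ref{chapman:conjdk}, which the paper explicitly leaves open (only Conjecture~\ref{chapman:conjck}, for the $C_k$ family, is proved there).

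Worse, half of your plan is known to be impossible. The geometric route --- a non-constant $\mathbb{F}_2$-morphism $E_k\to E_1$ (via some substitution $x=\psi(u)$) followed by Kleiman--Serre --- is ruled out at least for $k=2$: the paper points out, citing Ahmadi--McGuire--Rojas-Le\'on \cite{chapman:AMR}, that there is \emph{no} morphism $E_2\to E_1$, which is precisely why the method that settles the $C_k$ conjecture does not transfer. One would have to produce a suitable correspondence or an abstract isogeny factor $\Jac(E_1)\hookrightarrow\Jac(E_k)$ without a covering map, and your proposal offers no construction of one. The analytic route fares no better: showing that the Kloosterman-type sums $S_k(m)$ and $S_1(m)$ agree (or differ in a controlled way) over infinitely many $m$ is exactly the unproven first hypothesis of the Ahmadi--McGuire theorem discussed in the paper --- the second hypothesis is known to hold for the $E_k$ --- and even if it were established, that theorem only yields divisibility after base change to some $\mathbb{F}_{2^s}$, not over $\mathbb{F}_2$ as the conjecture demands. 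So the proposal restates the known difficulty rather than resolving it; as it stands, nothing beyond the elementary reformulation is proved.
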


In an invited talk at the Fq12 conference, the last author spoke about this topic
and stated these two conjectures. Conjecture \ref{chapman:conjdk} is proposed and
discussed in Ahmadi et al.\ \cite{chapman:AMR}.
In this paper we will prove Conjecture \ref{chapman:conjck}.

\section{Other Approaches}

Here we discuss three possible approaches to proving the conjectures.
The first two do not seem to work for Conjecture \ref{chapman:conjck},
but the third method does work as we will show in this paper. 
None of these methods appear to work for proving Conjecture \ref{chapman:conjdk}.

\subsection{Number of Rational Points}

The following theorem was proved in \cite{chapman:AM}.

\begin{thm}[Ahmadi--McGuire]\label{chapman:AMBCC}
Let $C(\mathbb{F}_q)$ and $D(\mathbb{F}_q)$ be smooth projective curves such that
\begin{enumerate}
\item $C(\mathbb{F}_q)$ and $D(\mathbb{F}_q)$ have the same number of points over infinitely many extensions  of $\mathbb{F}_q$. 
\item The $\mathrm{L}$-polynomial of $C$
over $\mathbb{F}_{q^k}$ has no repeated roots, for all $k\geq 1$.
\end{enumerate}
Then there exists a positive integer $s$ such that 
the $\mathrm{L}$-polynomial of $D(\mathbb{F}_{q^s})$ is divisible by the $\mathrm{L}$-polynomial of $C(\mathbb{F}_{q^s})$. 
\end{thm}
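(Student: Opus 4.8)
\section*{Proof Proposal for Theorem \ref{chapman:AMBCC}}

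The plan is to work directly with the Frobenius eigenvalues and with the theory of linear recurrence sequences. Let $\alpha_1,\dots,\alpha_{2g}$ be the reciprocals of the roots of $\mathrm{L}_C(t)$ (equivalently, the eigenvalues of Frobenius on $H^1(\overline C,\mathbb{Q}_\ell)$), and $\beta_1,\dots,\beta_{2h}$ the corresponding numbers for $D$, so that $\#C(\mathbb{F}_{q^m})=q^m+1-\sum_i\alpha_i^m$ and likewise for $D$. Hypothesis (1) then says that the exponential polynomial
\[
g(m)\;=\;\sum_{i=1}^{2g}\alpha_i^m\;-\;\sum_{j=1}^{2h}\beta_j^m
\]
vanishes for infinitely many $m$; collecting equal bases, $g$ is a $\mathbb{Z}$-linear combination $\sum_\ell c_\ell\gamma_\ell^m$ of powers of the distinct (nonzero, algebraic) numbers $\gamma_\ell$ occurring among the $\alpha_i$ and the $\beta_j$, hence is a linear recurrence sequence over a number field. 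Hypothesis (2), read off for each $k$, is precisely the statement that $\alpha_i^k\neq\alpha_j^k$ whenever $i\neq j$; equivalently, no ratio $\alpha_i/\alpha_j$ with $i\neq j$ is a root of unity.

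Next I would invoke the Skolem--Mahler--Lech theorem: the zero set of a linear recurrence sequence in characteristic $0$ is a finite set together with finitely many arithmetic progressions. Since hypothesis (1) makes this zero set infinite, it contains an honest arithmetic progression $a+b\mathbb{Z}_{\ge 0}$ with $b\ge 1$, so $\sum_\ell c_\ell\gamma_\ell^{a}(\gamma_\ell^{b})^{k}=0$ for all $k\ge 0$. Grouping the indices $\ell$ by the value of $\gamma_\ell^{b}$ and using a Vandermonde argument (the distinct values $\gamma_\ell^b$ being nonzero), one obtains, for every value $u$,
\[
\sum_{\ell\,:\,\gamma_\ell^{b}=u} c_\ell\,\gamma_\ell^{a}\;=\;0.
\]
I claim $s=b$ works. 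Fix $i$ and set $u=\alpha_i^{b}$. By hypothesis (2) with $k=b$, $\alpha_i$ is the only one of the $\alpha$'s with $b$-th power $u$; hence in the displayed sum the terms coming from the $\alpha$'s contribute exactly $\alpha_i^{a}$, while all remaining terms come from the $\beta_j$ with $\beta_j^{b}=u$. If there were no such $\beta_j$ the sum would reduce to $\alpha_i^{a}\neq 0$, a contradiction. So some $\beta_j$ satisfies $\beta_j^{b}=\alpha_i^{b}$, i.e.\ $\{\alpha_i^{b}\}_i\subseteq\{\beta_j^{b}\}_j$. Since the $\alpha_i^{b}$ are pairwise distinct (hypothesis (2) again), $\mathrm{L}_C$ over $\mathbb{F}_{q^{b}}$ is squarefree and each of its inverse roots is an inverse root of $\mathrm{L}_D$ over $\mathbb{F}_{q^{b}}$; a routine application of Gauss's lemma then promotes the resulting divisibility in $\mathbb{C}[t]$ to divisibility in $\mathbb{Z}[t]$, which is the assertion of the theorem with $s=b$.

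The main obstacle is the passage from ``infinitely many $m$'' to ``a full arithmetic progression'': this is exactly where Skolem--Mahler--Lech (equivalently, the $p$-adic analytic description of the zero set of a linear recurrence) is needed, and it seems unavoidable, since for Weil numbers the ratios $\alpha_i/\beta_j$ are in general not roots of unity and no elementary pigeonhole produces periodicity. Once such a progression is in hand, the remainder — the Vandermonde grouping and the multiplicity bookkeeping, in which hypothesis (2) enters twice (to isolate a single $\alpha$-term, and to ensure squarefreeness) — is routine.
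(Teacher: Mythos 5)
Your proposal is correct, and there is nothing to compare it against inside this paper: Theorem \ref{chapman:AMBCC} is stated here without proof, being quoted from Ahmadi--McGuire \cite{chapman:AM}. The proof in that reference (``Curves over finite fields and linear recurring sequences'') is essentially the argument you give---view the difference of point counts as a linear recurring sequence in the Frobenius eigenvalues, apply Skolem--Mahler--Lech to upgrade ``infinitely many zeros'' to a full arithmetic progression $a+b\mathbb{Z}_{\ge 0}$, and use the no-repeated-roots hypothesis at $k=b$ to match each $\alpha_i^{b}$ with some $\beta_j^{b}$, giving divisibility over $\mathbb{F}_{q^{b}}$---so your route coincides with the original one.
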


The first hypothesis holds for the curves $C_k$, 
but the second hypothesis does not. 
Thus we cannot use Theorem \ref{chapman:AMBCC} to prove Conjecture \ref{chapman:conjck}.
To see that the first hypothesis holds, we use
the following theorem proved by Lahtonen--McGuire--Ward \cite{chapman:LMW}.

\begin{thm}\label{chapman:lmward}
Let $K=\mathbb{F}_{2^n}$ where $n$ is a non-negative odd integer. 
Let
\[
Q(x)=\mathrm{Tr}(x^{2^k+1}+x^{2^j+1}),\qquad\text{ for }0\leq j<k.
\]
Then if $\gcd(k\pm j,n)=1$, then the number of zeros of $Q$ in $K$ is
\[
2^{n-1}+ \left(\frac{2}{n}\right) 2^{(n-1)/2},
\] 
where $(\frac{2}{n})$ is the Jacobi symbol.  
\end{thm}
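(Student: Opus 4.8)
The plan is to count the zeros of $Q(x)=\Tr(x^{2^k+1}+x^{2^j+1})$ in $K=\F_{2^n}$ by translating the problem into the language of quadratic forms over $\F_2$ and exponential sums. First I would observe that $Q\colon K\to\F_2$ is a quadratic form in the $n$-dimensional $\F_2$-vector space $K$: indeed each $\Tr(x^{2^i+1})$ is a quadratic form, since $(x+y)^{2^i+1}-x^{2^i+1}-y^{2^i+1}=x^{2^i}y+xy^{2^i}$ is $\F_2$-bilinear, and the trace of a bilinear form is bilinear. The number of zeros of a quadratic form $Q$ on $\F_2^n$ is $2^{n-1}$ exactly when $Q$ is nondegenerate of odd rank (or more precisely when its associated bilinear form has a radical on which $Q$ vanishes in a balanced way); the deviation from $2^{n-1}$ is governed by the Walsh/Weil sum $S=\sum_{x\in K}(-1)^{Q(x)}$, via $\#\{x: Q(x)=0\}=2^{n-1}+\tfrac12 S$. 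So the whole theorem reduces to showing $S=\bigl(\tfrac2n\bigr)2^{(n+1)/2}$ under the hypothesis $\gcd(k\pm j,n)=1$.

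Next I would compute the associated symplectic (alternating) bilinear form $B(x,y)=Q(x+y)+Q(x)+Q(y)$. A direct calculation gives $B(x,y)=\Tr\bigl((x^{2^k}y+xy^{2^k})+(x^{2^j}y+xy^{2^j})\bigr)$, which by moving the Frobenius around inside the trace can be written as $\Tr\bigl(y\cdot L(x)\bigr)$ for the $\F_2$-linear map $L(x)=x^{2^{2k}}+x^{2^{2j}}+ x^{2^{k}\cdot\text{(something)}}+\dots$; more cleanly, $B(x,y)=\Tr\bigl(y\,(x^{2^{k}}+x^{2^{-k}}+x^{2^{j}}+x^{2^{-j}})\bigr)$ after adjusting exponents mod $n$. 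The radical of $B$ is the kernel of the linearized polynomial $\Lambda(x)=x^{2^{2k}}+x^{2^{2j}}+x^{2^{k+j}}+\dots$ — in any case a $2$-linearized polynomial whose degree and coprimality behaviour is controlled by $\gcd(k\pm j,n)$. The key number-theoretic step is to show that, when $\gcd(k-j,n)=\gcd(k+j,n)=1$, this linearized polynomial has trivial kernel in $K$, so that $B$ is nondegenerate; since $n$ is odd this forces $Q$ to have odd rank $n$, hence $S=\pm 2^{(n+1)/2}$.

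Finally I would pin down the sign. The rank being $n$ tells us $|S|=2^{(n+1)/2}$ but not the sign; the sign is the Arf-invariant-type quantity and for these "quadratic trace" forms it is classically evaluated by a Gauss-sum argument. I would relate $S$ to a Gauss sum over $\F_2$ (or use the known evaluation of $\sum_x(-1)^{\Tr(x^{2^k+1})}$ together with a change of variables/twisting argument that reduces the two-term form $x^{2^k+1}+x^{2^j+1}$ to a one-term form via the substitution making the coprimality hypotheses usable), and track that the resulting sign is exactly the Jacobi symbol $\bigl(\tfrac2n\bigr)$, which is $+1$ if $n\equiv\pm1\pmod 8$ and $-1$ if $n\equiv\pm3\pmod 8$. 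I expect the main obstacle to be precisely this last step: proving nondegeneracy of $B$ from $\gcd(k\pm j,n)=1$ is a clean gcd argument with linearized polynomials, but getting the sign of the Weil sum right — rather than just its absolute value — requires either a careful Gauss-sum computation or an inductive/stabilising argument on $n$, and it is here that the exact hypothesis $\gcd(k\pm j,n)=1$ (as opposed to just $\gcd(k-j,n)=1$) does real work.
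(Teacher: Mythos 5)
A point of context first: the paper does not prove Theorem \ref{chapman:lmward} at all --- it quotes it from Lahtonen--McGuire--Ward \cite{chapman:LMW} and only uses its statement --- so there is no internal proof to compare with. Your outline follows the standard strategy one would expect (view $Q$ as a quadratic form on $\mathbb{F}_{2^n}$, reduce the zero count to the Walsh sum $S=\sum_x(-1)^{Q(x)}$ via $\#\{Q=0\}=2^{n-1}+\tfrac12 S$, determine the rank from the kernel of a linearized polynomial, then determine the sign), but it has a genuine error at its central step and leaves the other essential step unproved.

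The error: you claim that $\gcd(k\pm j,n)=1$ forces the associated alternating form $B(x,y)=\Tr\bigl(y\,(x^{2^k}+x^{2^{n-k}}+x^{2^j}+x^{2^{n-j}})\bigr)$ to be nondegenerate, so that $Q$ has odd rank $n$ and $S=\pm2^{(n+1)/2}$. This is impossible: an alternating bilinear form over $\mathbb{F}_2$ has even rank, so on an odd-dimensional space it is always degenerate, and indeed $x=1$ always lies in the radical here since $L(1)=1+1+1+1=0$ for $L(x)=x^{2^k}+x^{2^{n-k}}+x^{2^j}+x^{2^{n-j}}$. What the gcd hypotheses actually give is that the radical is exactly $\{0,1\}$: raising $L$ to the power $2^k$ gives $x^{2^{2k}}+x^{2^{k+j}}+x^{2^{k-j}}+x=M(N(x))$ with $N(x)=x^{2^{k-j}}+x$ and $M(y)=y^{2^{k+j}}+y$, both with kernel $\mathbb{F}_2$ by the hypotheses, and $1\notin N(\mathbb{F}_{2^n})$ because the image of $N$ is the trace-zero hyperplane while $\Tr(1)=1$ for odd $n$. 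Since moreover $Q(1)=\Tr(1+1)=0$, the form has rank $n-1$ and $|S|=2^{(n+1)/2}$ follows --- so the magnitude is recoverable, but by an argument different from the one you state. The second gap is the sign: asserting that a Gauss-sum or twisting argument will ``track that the resulting sign is exactly $\left(\frac{2}{n}\right)$'' is precisely the substantive content of the Lahtonen--McGuire--Ward theorem (it is where the Jacobi symbol and the full strength of $\gcd(k+j,n)=\gcd(k-j,n)=1$ enter), and your proposal gives no actual argument for it. As written, the proposal establishes neither the magnitude (its stated route to it cannot work) nor the sign.
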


If we put $j=0$ in Theorem \ref{chapman:lmward} then $Q(x)=\mathrm{Tr}(x^{2^k+1}+x)$.
It follows that $C_1$ and $C_k$ have the same number of rational 
points over $\mathbb{F}_{2^m}$ for any $m$ with $\gcd(k,m)=1$.
Therefore the first hypothesis of Theorem \ref{chapman:AMBCC} holds.

The $C_k$ curves are supersingular so the L-polynomial of $C_1$ 
(which is $2t^2+2t+1$) has
repeated roots over some extensions of $\mathbb{F}_2$, something that can also be seen directly.
Therefore the second hypothesis of Theorem \ref{chapman:AMBCC}  does not hold.

\bigskip

We remark that the second hypothesis of Theorem \ref{chapman:AMBCC}  \emph{does} hold
for the $E_k$ curves, see \cite{chapman:AMR}.
However, we are unable to prove that the first hypothesis holds, although it is conjectured
that it does. 

\bigskip

A similar but different theorem was proved in \cite{chapman:AMR}.

\begin{thm}[Ahmadi--McGuire--Rojas-Le\'on]\label{chapman:AMRL} 
Let $C$ and $D$ be two smooth projective curves over $\mathbb{F}_q$.
Assume there exists a positive integer $k>1$ such that
\begin{enumerate}
\item $\#C(\mathbb{F}_{q^m})=\#D(\mathbb{F}_{q^m})$ for every $m$ that is not divisible by $k$, and
\item the $k$-th powers of the roots of $\mathrm{L}_C(t)$ are all distinct. 
\end{enumerate}
Then  $\mathrm{L}_D(t)=q(t^k)\ \mathrm{L}_C(t)$ for some polynomial $q(t)$ in $\mathbb{Z}[t]$. 
\end{thm}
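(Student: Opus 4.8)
The plan is to pass to the reciprocal roots of the two L-polynomials (the Frobenius eigenvalues), to rewrite hypothesis (1) as a statement about the power series $g(t):=\mathrm{L}_D(t)/\mathrm{L}_C(t)$, and then to use hypothesis (2) to upgrade $g$ from a rational function of $t^k$ to an honest polynomial of the required form.

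First I would write $\mathrm{L}_C(t)=\prod_i(1-\alpha_i t)$ and $\mathrm{L}_D(t)=\prod_j(1-\beta_j t)$ and note that, since the factors $(1-t)(1-qt)$ in the two zeta functions cancel in the quotient,
\[
\log\frac{\mathrm{L}_D(t)}{\mathrm{L}_C(t)}=\sum_{m\ge 1}\frac{t^m}{m}\bigl(\#D(\mathbb{F}_{q^m})-\#C(\mathbb{F}_{q^m})\bigr).
\]
By hypothesis (1) every coefficient with $k\nmid m$ vanishes, so $\log g(t)$, and hence $g(t)$ itself (because $g(0)=1$), lies in $\mathbb{Q}[[t^k]]$. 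Since $g$ is also a rational function of $t$, comparing $g(t)$ with $g(\zeta t)$ for a primitive $k$-th root of unity $\zeta$ — they agree as formal power series, hence as rational functions — shows $g(\zeta t)=g(t)$ identically, so $g$ is a rational function of $t^k$, say $g(t)=h(t^k)$.

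The key step is to show $g$ has no poles. Any pole of $g$ must be a root of $\mathrm{L}_C(t)$, i.e.\ a point $\alpha_i^{-1}$ (these are nonzero, and $g$ is regular at $0$); but since $g(t)=h(t^k)$, the pole set is stable under multiplication by $k$-th roots of unity. If $\alpha_i^{-1}$ were a pole, then $\zeta\alpha_i^{-1}$ would be a pole, hence equal to some $\alpha_j^{-1}$, forcing $\alpha_i^k=\alpha_j^k$; by hypothesis (2) this gives $i=j$ and $\zeta=1$, contradicting $k>1$. Thus $g$ is a pole-free rational function, i.e.\ a polynomial. Finally, $\mathrm{L}_C(t)$ has integer coefficients and constant term $1$, so it is a unit in $\mathbb{Z}[[t]]$, whence $g\in\mathbb{Z}[[t]]$; being also a polynomial, $g\in\mathbb{Z}[t]$, and its invariance under $t\mapsto\zeta t$ forces $g(t)=q(t^k)$ with $q\in\mathbb{Z}[t]$, so $\mathrm{L}_D(t)=q(t^k)\,\mathrm{L}_C(t)$ as claimed.

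I expect the main obstacle to be the pole-removal step: it is exactly there that hypothesis (2) enters, and one must justify carefully that the pole set really is stable under $t\mapsto\zeta t$ (this uses $g=h(t^k)$ as rational functions, not merely as formal power series) and that distinctness of the $k$-th powers of the roots of $\mathrm{L}_C$ is enough to kill every nontrivial $\zeta$. The remaining bookkeeping — keeping track of whether a given identity lives in $\mathbb{C}(t)$, $\mathbb{Q}(t)$, $\mathbb{Z}[[t]]$ or $\mathbb{Z}[t]$ — is routine but should be spelled out.
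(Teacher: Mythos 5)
Your proof is correct. Note that the paper itself does not prove Theorem \ref{chapman:AMRL} --- it is quoted from Ahmadi--McGuire--Rojas-Le\'on \cite{chapman:AMR} --- so there is no internal proof to compare against; your argument (taking $g(t)=\mathrm{L}_D(t)/\mathrm{L}_C(t)$, using $\log g(t)=\sum_{m\ge1}\frac{t^m}{m}(\#D(\mathbb{F}_{q^m})-\#C(\mathbb{F}_{q^m}))$ to get $g(\zeta t)=g(t)$, and then using hypothesis (2) to rule out poles, so that $g\in\mathbb{Z}[t]$ is a polynomial in $t^k$) is sound, and it is essentially the natural zeta-function argument behind the cited result.
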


We cannot use Theorem \ref{chapman:AMRL} to prove Conjecture \ref{chapman:conjck},
because the first hypothesis does not hold. 
It is not true that $C_1$ and $C_k$ have the same number of rational 
points over $\mathbb{F}_{2^m}$ for any $m$ not divisible by $k$ (or another integer).
This can be seen by looking at small examples using a computer algebra package.

It is interesting to compare the first hypothesis in Theorem \ref{chapman:AMRL} with
the first  hypothesis in Theorem \ref{chapman:AMBCC} (which does hold for $C_1$ and $C_k$).

\subsection{The Kani--Rosen theorem}

Let $X$ be an affine variety over a field $k$ with coordinate ring $A$. Given an action of an algebraic group $G$ on $X$, one may construct a so-called quotient variety $X/G$ given by $\mathrm{Spec}(A^G)$ where $A^G$ denotes the ring of invariants of $A$ with the induced action of $G$. If furthermore, $G$ is reductive then $A^G$ is finitely generated so $X/G$ is also an affine variety ($A^G$ will be reduced if $A$ is).

Let $G$ be a finite subgroup of the automorphism group of a curve $C$ and let $\Jac(C)$ denote the Jacobian of $C$.
The Kani--Rosen theorem \cite[thm.\ B]{chapman:KR} concerns isogenies and idempotents in the rational group algebra 
$\mathbb{Q}[G]$ and is useful is proving divisibility relations between L-polynomials.

\begin{thm}[Kani--Rosen]
Let $G \subseteq \mathrm{Aut}(C)$ be a (finite) subgroup such that $G=H_1 \cup H_2 \cup \ldots \cup H_r$ where the subgroups $H_i \subseteq G$ satisfy $H_i\cap H_j=\{1\}$ when $i\ne j$. Then there is an isogeny relation
\[
\Jac(C)^{r-1}\times \Jac(C/G)^g \cong \Jac(C/H_1)^{h_1}\times \ldots \times \Jac(C/H_r)^{h_r},
\]
where $g=|G|$ and $h_i=|H_i|$.
\end{thm}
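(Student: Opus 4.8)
The plan is to reinterpret the hypothesis on $G$ as an identity between idempotents in the group algebra $\mathbb{Q}[G]$ acting on $\Jac(C)$, and then to invoke the general principle that such an identity forces an isogeny relation among the corresponding abelian subvarieties. Throughout I work in the category of abelian varieties up to isogeny, write $\operatorname{End}^0(A)=\operatorname{End}(A)\otimes_{\mathbb{Z}}\mathbb{Q}$, and for an idempotent $\varepsilon\in\operatorname{End}^0(A)$ let $A_\varepsilon=\operatorname{im}(n\varepsilon)$ for any positive integer $n$ with $n\varepsilon\in\operatorname{End}(A)$; this is an abelian subvariety well defined up to isogeny.

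The action of $G$ on $C$ induces functorially an action on $\Jac(C)$ and hence a $\mathbb{Q}$-algebra map $\mathbb{Q}[G]\to\operatorname{End}^0(\Jac(C))$. For a subgroup $H\le G$ the element $\varepsilon_H=\frac{1}{|H|}\sum_{h\in H}h$ is an idempotent, with $\varepsilon_{\{1\}}=\id$, and I would first show $A_{\varepsilon_H}\sim\Jac(C/H)$: writing $\pi_H\colon C\to C/H$ for the quotient map, one has on Jacobians $\pi_{H,*}\pi_H^{*}=[\,|H|\,]$ and $\pi_H^{*}\pi_{H,*}=|H|\,\varepsilon_H$, and since $\pi_{H,*}$ is surjective and $\pi_H^{*}$ has finite kernel it follows that $A_{\varepsilon_H}=\pi_H^{*}\Jac(C/H)\sim\Jac(C/H)$. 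In particular $A_{\varepsilon_G}\sim\Jac(C/G)$ and $A_{\id}=\Jac(C)$.

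Next I would translate the covering hypothesis into a relation in $\mathbb{Q}[G]$. Since $G=H_1\cup\cdots\cup H_r$ with $H_i\cap H_j=\{1\}$ for $i\ne j$, the set $G\setminus\{1\}$ is the disjoint union of the $H_i\setminus\{1\}$, so summing group elements gives $\sum_{i=1}^{r}\sum_{h\in H_i}h=(r-1)\cdot 1+\sum_{h\in G}h$, i.e.
\[
h_1\varepsilon_{H_1}+\cdots+h_r\varepsilon_{H_r}=(r-1)\,\id+g\,\varepsilon_G
\]
in $\mathbb{Q}[G]$, hence in $\operatorname{End}^0(\Jac(C))$. The remaining, and decisive, step is the passage from such an identity of idempotents with nonnegative integer weights to an isogeny relation: if $\sum_i m_i\varepsilon_i=\sum_j n_j\delta_j$ in $\operatorname{End}^0(A)$ with $\varepsilon_i,\delta_j$ idempotents, then $\prod_i A_{\varepsilon_i}^{m_i}\sim\prod_j A_{\delta_j}^{n_j}$. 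Granting this and feeding in the computations above yields precisely
\[
\Jac(C)^{r-1}\times\Jac(C/G)^{g}\sim\Jac(C/H_1)^{h_1}\times\cdots\times\Jac(C/H_r)^{h_r}.
\]

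The hard part is that last step. I would prove it using the fact that abelian varieties up to isogeny form a semisimple $\mathbb{Q}$-linear category: writing $A\sim\prod_\lambda X_\lambda^{a_\lambda}$ with pairwise non-isogenous simple $X_\lambda$, one has $\operatorname{End}^0(A)\cong\prod_\lambda M_{a_\lambda}(D_\lambda)$ with $D_\lambda$ division algebras, and for an idempotent $\varepsilon$ the multiplicity of $X_\lambda$ in $A_\varepsilon$ is a fixed positive multiple of the reduced trace of the $\lambda$-block of $\varepsilon$. The reduced trace being $\mathbb{Q}$-linear, the identity $\sum_i m_i\varepsilon_i=\sum_j n_j\delta_j$ propagates to the same identity among these multiplicities block by block, which says exactly that the two products have the same simple factors with the same multiplicities, hence are isogenous. (It is convenient here that, with characteristic-$0$ coefficients, $\varepsilon+\varepsilon'$ idempotent already forces $\varepsilon,\varepsilon'$ orthogonal, so no nonlinear corrections intervene.) Getting the definition of $A_\varepsilon$ and this reduced-trace count exactly right is the only genuinely delicate point; the rest is formal manipulation.
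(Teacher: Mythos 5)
Your proof is correct, but note that the paper itself offers no proof of this statement: it is quoted verbatim from Kani--Rosen \cite{chapman:KR} (their Theorem B) and used as a black box, so there is no internal argument to compare against. What you have written is essentially a reconstruction of Kani--Rosen's own proof: the identification $\varepsilon_H\Jac(C)\sim\Jac(C/H)$ via $\pi_{H,*}\pi_H^{*}=[\,|H|\,]$ and $\pi_H^{*}\pi_{H,*}=\sum_{h\in H}h^{*}=|H|\varepsilon_H$, the combinatorial identity $\sum_i h_i\varepsilon_{H_i}=(r-1)\id+g\,\varepsilon_G$ coming from $G\setminus\{1\}$ being the disjoint union of the $H_i\setminus\{1\}$, and then their Theorem A, namely that a $\mathbb{Z}_{\ge 0}$-linear relation among idempotents of $\operatorname{End}^0(\Jac C)$ forces an isogeny among the corresponding factors. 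Your proof of that last step via the semisimplicity of the isogeny category and reduced traces on the blocks $M_{a_\lambda}(D_\lambda)$ is sound: an idempotent's block is conjugate to a diagonal idempotent, its rank (which is the multiplicity of $X_\lambda$ in $A_\varepsilon$) is a fixed positive multiple of its reduced trace, and Poincar\'e complete reducibility lets you conclude from equality of multiplicities. Two minor points worth tightening if you write this up: justify that $A_\varepsilon$ is well defined up to isogeny independently of the representative and of $n$, and note that the parenthetical remark about orthogonality of idempotents is not needed anywhere, since the relation you use is a linear identity, not a claim that a sum of idempotents is idempotent.
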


For any subgroup $H$ of $G$ there is an idempotent
\[
\varepsilon_H = \frac{1}{|H|}\sum_{h\in H} h.
\]
If $G$ is the Klein 4-group with subgroups $H_1, H_2, H_3$, we have the 
idempotent relation
\[
\varepsilon_1 + 2\varepsilon_G=\varepsilon_{H_1}+\varepsilon_{H_2}+\varepsilon_{H_3}.
\]
Applying the Kani--Rosen theorem we get an isogeny
\[
{\rm Jac}(C) \times {\rm Jac}({C/G})^2 \sim 
{\rm Jac}({C/H_1})\times {\rm Jac}({C/H_2})\times {\rm Jac}({C/H_3}).
\]

In order to apply this isogeny to $C_k$, we need two involutions in the 
automorphism group  of $C_k$.  
We want involutions that are defined over $\mathbb{F}_2$.
One is the hyperelliptic involution
\[
\iota: (x,y)\mapsto (x,y+1)
\]
and the other is the map from \cite{chapman:GV}
\[
\phi: (x,y)\mapsto (x+1,y+B(x))
\]
where $B(x)=x+x^2+x^4+x^8+\cdots +x^{2^{k-1}}$. Then
\[
\iota \circ \phi = \phi \circ \iota : (x,y) \mapsto (x+1,y+1+B(x)).
\]
Note that $\phi$ is an involution if and only if $B(1)=0$
if and only if $k$ is even.
When $k$ is odd, $\phi$ has order 4 and $\phi^2=\iota$. 
When $k$ is even, $\phi$ and $\iota$ together generate a Klein 4-group in $\Aut(C_k)$.
In fact we have the following.

\begin{proposition}
If $k$ is odd then there are no non-hyperelliptic involutions on $C_k$ of the form $(x,y)\mapsto (x+1,y+B(x))$ where $B(x)$ is a linearised polynomial $B(x)=\sum_{i\ge 0} a_ix^{2^i}$ with $a_i\in \mathbb{F}_2$.
\end{proposition}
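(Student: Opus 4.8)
The plan is to show that the mere requirement that $\phi\colon(x,y)\mapsto(x+1,\,y+B(x))$ be an automorphism of $C_k$ already pins down $B$ completely, and that the resulting $\phi$ has order $4$, not $2$, when $k$ is odd. (Note that any map of this shape moves the $x$-coordinate, so it is automatically distinct from the hyperelliptic involution $\iota\colon(x,y)\mapsto(x,y+1)$; the real content is that no such involution exists at all when $k$ is odd.)

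First I would write down the condition for $\phi$ to preserve $C_k$. Substituting $x\mapsto x+1$ and $y\mapsto y+B(x)$ into $y^2+y=x^{2^k+1}+x$ and simplifying with $(x+1)^{2^k}=x^{2^k}+1$, hence $(x+1)^{2^k+1}=x^{2^k+1}+x^{2^k}+x+1$, one finds that the curve equation is preserved if and only if
\[
B(x)^2+B(x)=x^{2^k}+x .
\]
Since the affine coordinate ring is free over $\mathbb{F}_2[x]$ with basis $1,y$, the obstruction to $\phi$ being a morphism lies in $\mathbb{F}_2[x]$, so this must hold as a genuine polynomial identity, not merely on the curve.

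Next I would solve this equation for linearised $B$. Writing $B(x)=\sum_{i\ge 0}a_ix^{2^i}$ with $a_i\in\mathbb{F}_2$ and using $a_i^2=a_i$, one gets $B(x)^2+B(x)=\sum_{i\ge 0}(a_i+a_{i-1})x^{2^i}$ with the convention $a_{-1}:=0$. Comparing coefficients with $x+x^{2^k}$ forces $a_0=1$, then $a_i=a_{i-1}$ for $1\le i\le k-1$ so $a_1=\cdots=a_{k-1}=1$, then $a_k=0$, and finally $a_i=0$ for $i>k$. Hence necessarily $B(x)=x+x^2+x^4+\cdots+x^{2^{k-1}}$, so there is at most one automorphism of the stated shape, namely the map $\phi$ already introduced above.

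Finally I would compute $\phi^2$. Because $B$ is linearised, $B(x+1)=B(x)+B(1)$, so $\phi^2\colon(x,y)\mapsto(x,\,y+B(1))$, and $B(1)=k\bmod 2$. When $k$ is odd this is the hyperelliptic involution $\iota\neq\id$, so $\phi$ has order $4$ and is not an involution; combined with the uniqueness of $B$, this proves the proposition. I do not expect a serious obstacle here, as the computation is short. The two points that need a little care are the reduction step — checking that the failure of $\phi$ to be a morphism is exactly the non-vanishing of $B(x)^2+B(x)+x^{2^k}+x$ in $\mathbb{F}_2[x]$ — and the standard remark that an automorphism of the affine model extends uniquely to the smooth projective model, so nothing is lost by arguing on the affine chart.
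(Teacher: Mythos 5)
Your proof is correct and takes essentially the same route as the paper: the condition for $\phi$ to preserve $C_k$ forces $B(x)^2+B(x)=x^{2^k}+x$, whose unique linearised solution is $B(x)=\sum_{i=0}^{k-1}x^{2^i}$, and then $B(1)=1$ for odd $k$ gives $\phi^2=\iota\neq\id$, so $\phi$ is not an involution. You merely make explicit the coefficient comparison and the reduction to a genuine polynomial identity, which the paper leaves implicit.
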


\begin{proof}
Such a $B(x)$ must satisfy $B(1)=0$ and $B(x)^2+B(x)=x^{2^k}+x$. The resulting conditions thus imposed on the coefficients $a_i$ mean that $B(x)=\sum_{i=0}^{k-1} x^{2^i}$ but then $B(1)\ne 0$ if $k$ is odd.
\end{proof}

Remark:
It follows now from van der Geer and van der Vlugt \cite{chapman:GV} that this exhausts the subgroup of $\Aut_{\mathbb{F}_2}(C_k)$ fixing the branch points of $C_k\rightarrow \mathbb{P}^1$.

Therefore, the first problem in using the Kani--Rosen theorem 
to prove Conjecture \ref{chapman:conjck}
is that we only have
the appropriate automorphism group for $k$ even. It therefore
appears that for $k$ odd, one cannot use the Kani--Rosen theorem to prove the conjecture,
at least not directly.

\subsection{Kleiman--Serre}

The following theorem is well-known in the area.

\begin{thm}\label{chapman:KleimanSerre} 
(Kleiman--Serre)
If there is a surjective morphism of curves $C \longrightarrow C'$ that is defined over $\mathbb{F}_q$
then $\mathrm{L}_{C'}(t)$ divides $\mathrm{L}_C(t)$.
\end{thm}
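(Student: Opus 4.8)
The plan is to interpret both $L$-polynomials as characteristic polynomials of the geometric Frobenius on first $\ell$-adic cohomology and then to exhibit $H^1(\overline{C'},\mathbb{Q}_\ell)$ as a Frobenius-stable subspace of $H^1(\overline{C},\mathbb{Q}_\ell)$. By the description of the Weil conjecture factorization in \eqref{chapman:weil-conj} we have $\mathrm{L}_C(t)=P_1(t)=\det(1-F^\ast t;H^1(\overline{C},\mathbb{Q}_\ell))$, and likewise $\mathrm{L}_{C'}(t)=\det(1-F^\ast t;H^1(\overline{C'},\mathbb{Q}_\ell))$. First I would note that a surjective morphism $\phi\colon C\to C'$ of smooth projective curves is automatically non-constant, hence finite, and is defined over $\mathbb{F}_q$ by hypothesis. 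The theorem is therefore equivalent to the purely linear-algebraic assertion that $\det(1-F^\ast t;\,\cdot\,)$ for the $C'$ space divides that for the $C$ space.

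Such a finite morphism induces a pullback $\phi^\ast\colon H^1(\overline{C'},\mathbb{Q}_\ell)\to H^1(\overline{C},\mathbb{Q}_\ell)$ together with a Gysin (trace) map $\phi_\ast$ in the opposite direction, and the projection formula gives $\phi_\ast\circ\phi^\ast=(\deg\phi)\cdot\operatorname{id}$. Since $\deg\phi$ is a nonzero integer acting on a $\mathbb{Q}_\ell$-vector space, this forces $\phi^\ast$ to be injective. Because $\phi$ is defined over $\mathbb{F}_q$, the map $\phi^\ast$ commutes with $F^\ast$, so its image $W:=\phi^\ast\big(H^1(\overline{C'},\mathbb{Q}_\ell)\big)$ is an $F^\ast$-stable subspace of $V:=H^1(\overline{C},\mathbb{Q}_\ell)$, and $\phi^\ast$ is an $F^\ast$-equivariant isomorphism onto $W$. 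Hence the action of $F^\ast$ on $W$ has the same characteristic polynomial as on $H^1(\overline{C'},\mathbb{Q}_\ell)$, namely $\mathrm{L}_{C'}(t)$. Choosing a basis of $V$ adapted to the flag $W\subseteq V$ makes $F^\ast$ block upper-triangular, whence $\det(1-F^\ast t;V)=\det(1-F^\ast t;W)\cdot\det(1-F^\ast t;V/W)$; that is, $\mathrm{L}_C(t)=\mathrm{L}_{C'}(t)\,g(t)$ for some $g(t)\in\mathbb{Q}_\ell[t]$ with $g(0)=1$.

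It remains to descend this divisibility to $\mathbb{Z}[t]$. Both $\mathrm{L}_C(t)$ and $\mathrm{L}_{C'}(t)$ lie in $\mathbb{Z}[t]$, and $\mathrm{L}_{C'}(t)$ has constant term $1$, hence is primitive. Since $\mathrm{L}_{C'}(t)$ divides $\mathrm{L}_C(t)$ in $\mathbb{Q}_\ell[t]$, carrying out the division algorithm over $\mathbb{Q}$ and invoking uniqueness of the quotient and remainder in $\mathbb{Q}_\ell[t]$ shows the quotient $g(t)$ actually lies in $\mathbb{Q}[t]$; Gauss's lemma then upgrades this to $g(t)\in\mathbb{Z}[t]$. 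Therefore $\mathrm{L}_{C'}(t)$ divides $\mathrm{L}_C(t)$ in $\mathbb{Z}[t]$, as required.

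I expect the main point one must invoke rather than derive to be the existence of the Gysin map and the projection formula $\phi_\ast\circ\phi^\ast=(\deg\phi)\cdot\operatorname{id}$ in $\ell$-adic cohomology, which supplies the crucial injectivity of $\phi^\ast$; everything after that is linear algebra together with the elementary integrality step. An alternative route that sidesteps explicit cohomological functoriality is to argue with Jacobians: $\phi$ induces $\phi^\ast\colon\Jac(C')\to\Jac(C)$ with finite kernel (again because $\phi_\ast\circ\phi^\ast=[\deg\phi]$), so by Poincar\'e's complete reducibility theorem $\Jac(C)$ is isogenous over $\mathbb{F}_q$ to $\Jac(C')\times A$ for some abelian variety $A$. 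As isogenous abelian varieties over $\mathbb{F}_q$ share the characteristic polynomial of Frobenius, and the (reversed) characteristic polynomial of Frobenius on $\Jac(C)$ is exactly $\mathrm{L}_C(t)$, the same factorization $\mathrm{L}_C(t)=\mathrm{L}_{C'}(t)\cdot\mathrm{L}_A(t)$ results, giving the divisibility.
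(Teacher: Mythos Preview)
Your argument is correct and follows essentially the same route as the paper's sketch: the paper simply cites Kleiman for the injectivity of $\phi^\ast$ on \'etale cohomology and then invokes the determinantal description of $P_1(t)$, whereas you supply that injectivity via the projection formula $\phi_\ast\circ\phi^\ast=(\deg\phi)\cdot\id$ and spell out the linear-algebra and integrality steps. Your added Jacobian alternative is a nice complement but is not in the paper.
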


\begin{proof} (Sketch)
Given a surjective morphism $f\colon C\rightarrow C'$ one obtains an induced map $f^\ast$ on the \'etale cohomology groups that is injective (Kleiman \cite[prop.\ 1.2.4]{chapman:Kleiman}). Given the interpretation of the polynomials $P_i(t)$ described in the introduction (equation \ref{chapman:weil-conj}) as determinants via the Weil conjectures, the result follows.
\end{proof}

We will use this result in the next section to prove Conjecture \ref{chapman:conjck}.

\section{Proof of Conjecture \ref{chapman:conjck}}

We prove Conjecture \ref{chapman:conjck} using Theorem \ref{chapman:KleimanSerre}.
In fact we will prove something more general: that there is a map
from $C_{k}$ to $C_l$ for any integer $l$ dividing $k$.
Putting $l=1$ proves Conjecture \ref{chapman:conjck}.

Before we construct the morphism we consider a simpler case as motivation. Let $A_k$ denote the smooth projective model of the affine curve defined over $\mathbb{F}_2$
\[
y^2+y=x^{2^k}+x.
\]
One can easily verify that the map
\[
(x,y)\mapsto (\mathrm{Tr}_{nk/k}(x),y), \qquad \text{ for }\mathrm{Tr}_{nk/k}(x)=x+\sum_{i=1}^{n-1}x^{2^{ik}},
\]
is a morphism  $A_{nk}\rightarrow A_k$ for $n,k$ positive integers and $n>1$.

The similarity between the curves $C_k$ and $A_k$ is apparent, however the morphism above differs quite radically from the one to be described below.

\begin{thm}\label{chapman:ckdiv}
Let $k>l$ be integers with $l$ dividing $k$.
Then there is a non-constant morphism $C_k\longrightarrow C_l$ defined over $\mathbb{F}_2$.
\end{thm}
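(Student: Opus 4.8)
The plan is to construct the morphism $C_k \to C_l$ explicitly by writing down a pair of rational functions $(X(x,y), Y(x,y))$ in the function field of $C_k$ that satisfy the defining equation of $C_l$. Since $l \mid k$, set $k = lm$. The natural candidate for the first coordinate is a partial-trace-type map $X = x + x^{2^l} + x^{2^{2l}} + \cdots$ in the spirit of the $A_k$ motivating example, but the presence of the extra $x$ term on the right-hand side of $y^2 + y = x^{2^k+1} + x$ means the naive choice will not close up. First I would compute, for a candidate linearised (or linear-combination) expression $X$ in $x$, the quantity $X^{2^l+1} + X$ and try to match it, modulo the Artin--Schreier relation $y^2 + y = x^{2^k+1}+x$ (equivalently working modulo $\wp(z) = z^2+z$), to an expression of the form $Y^2 + Y$ where $Y$ is to be determined. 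In other words, I want $X^{2^l+1} + X - (x^{2^k+1}+x)$ to lie in the image of $\wp$ on the relevant ring, with $Y$ chosen as a preimage built from $x$ and $y$.

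The key algebraic input is the telescoping identity for the exponent $2^k+1$ in terms of $2^l+1$. Writing $x^{2^k+1} = x \cdot x^{2^{lm}}$ and using $2^{lm}+1 \equiv$ (a sum of shifts of $2^l+1$) one gets an identity of the shape
\[
x^{2^k+1} - (\text{cross terms}) = \sum_{i=0}^{m-1}\bigl(x^{2^{il}}\bigr)^{2^l+1} + \wp(\text{something}),
\]
and the cross terms should be exactly what is produced by expanding $X^{2^l+1}$ when $X = \sum_{i=0}^{m-1} x^{2^{il}}$ (or a suitable variant). So the skeleton of the proof is: (1) define $X$ as an explicit $\F_2$-linear polynomial in $x$; (2) expand $X^{2^l+1}$ and collect terms; (3) show the discrepancy with $x^{2^k+1}+x$ is $\wp$ of an explicit polynomial $g(x)$ plus a term handled by $y$; (4) set $Y = y + g(x) + (\text{linear correction})$ and verify $Y^2+Y = X^{2^l+1}+X$ using $y^2+y = x^{2^k+1}+x$; (5) conclude the map extends to the smooth projective models (automatic, since a rational map from a smooth curve is a morphism) and is non-constant because $X$ is a non-constant function of $x$ and the map on $x$-lines has positive degree.

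The main obstacle I anticipate is step (2)--(3): getting the cross-terms in the expansion of $X^{2^l+1} = X \cdot X^{2^l}$ to collapse correctly. The product $\left(\sum_i x^{2^{il}}\right)\left(\sum_j x^{2^{(j+1)l}}\right)$ produces a double sum of monomials $x^{2^{il} + 2^{(j+1)l}}$, and one must reorganize these into ``diagonal'' squares $\bigl(x^{2^{il}}\bigr)^{2^l+1} = x^{2^{il}+2^{(i+1)l}}$ plus genuinely off-diagonal pairs $x^{2^{al}+2^{bl}} + x^{2^{bl}+2^{al}}$; over $\F_2$ such a symmetric pair equals $\wp\bigl(x^{2^{al}} \cdot (\text{partial trace})\bigr)$ or similar, by the standard identity $u v + (uv)$-type manipulations — but pinning down the exact correction polynomial $g(x)$ and checking the single leftover $+x$ versus $+X$ discrepancy (which is why the extra linear terms on the two curves are crucial, and presumably why this only works for the ``$+x$'' family $C_k$ and not the $E_k$ family) will require careful bookkeeping. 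Once the identity is verified as a polynomial identity in $\F_2[x][y]/(y^2+y - x^{2^k+1}-x)$, the geometric conclusion via Theorem~\ref{chapman:KleimanSerre} is immediate, and taking $l=1$ yields Conjecture~\ref{chapman:conjck}.
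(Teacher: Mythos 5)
Your overall template is the same as the paper's: take $q=2^l$, $r=k/l$, map $(x,y)\mapsto(f(x),\,y+g(x))$ with $f(x)=\sum_{j=0}^{r-1}x^{q^j}$, and reduce everything to the single polynomial identity $f(x)^{q+1}+f(x)=x^{q^r+1}+x+g(x)^2+g(x)$; your steps (1), (4) and (5) are fine. But the argument stops exactly where the content is: you never exhibit $g$, and the one concrete mechanism you offer for absorbing the unwanted terms is misstated. The off-diagonal monomials do not survive in ``symmetric pairs $x^{2^{al}+2^{bl}}+x^{2^{bl}+2^{al}}$'' --- that is the same monomial written twice, hence already $0$ over $\mathbb{F}_2$ --- and $\wp\bigl(x^{2^{al}}\cdot(\text{partial trace})\bigr)$ is not a usable identity, since $\wp$ does not interact with such products. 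What actually happens when you expand $f^{q+1}=f\cdot f^{q}$ is that the cross terms of $\bigl(\sum_{j=1}^{r-1}x^{q^j}\bigr)^2$ cancel mod $2$, and the quantity that must be realised as $g^2+g$ is
\[
f^{q+1}+f+x^{q^r+1}+x=\sum_{j=1}^{r-1}\bigl(x^{q^j}+x^{2q^j}+x^{1+q^j}+x^{q^j+q^r}\bigr),
\]
where the $x^{q^j}$ terms come from the linear mismatch $f(x)+x$. Your proposed reorganisation into diagonal $(2^l+1)$-st powers $x^{q^i+q^{i+1}}$ plus symmetric pairs does not produce this list and does not by itself yield a correction polynomial.

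The missing ingredient is the elementary fact that for any monomial $u$ one has $u+u^{q}=\wp\bigl(\sum_{s=0}^{l-1}u^{2^s}\bigr)$, applied to $u=x^{q^i+q^j}$ and telescoped over $0\le i<j\le r-1$. Concretely, the paper takes
\[
g(x)=\sum_{j=1}^{r-1}x^{q^j}+\sum_{0\le i<j\le r-1}\ \sum_{s=0}^{l-1}x^{2^s(q^i+q^j)},
\]
so that $g^2+g=\sum_{j=1}^{r-1}\bigl(x^{q^j}+x^{2q^j}\bigr)+\sum_{0\le i<j\le r-1}\bigl(x^{q^i+q^j}+x^{q^{i+1}+q^{j+1}}\bigr)$, and the double sum telescopes to $\sum_{j=1}^{r-1}x^{1+q^j}+\sum_{i=1}^{r-1}x^{q^i+q^r}$, exactly the required remainder. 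With this $g$ in hand, your steps (4)--(5) and the appeal to Theorem~\ref{chapman:KleimanSerre} with $l=1$ go through; without it, the assertion that the discrepancy lies in the image of $\wp$ is precisely the theorem being proved and cannot be left as bookkeeping.
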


\begin{proof}
Write $k=lr$ and set $q=2^l$.
We claim that there is a morphism of the form $x\mapsto f(x)$,
$y\mapsto y+g(x)$ from $C_k$ to $C_l$ where $f$ and $g$ are polynomials.
For this to be the case it suffices that
\begin{equation}\label{chapman:covering-condition}
f(x)^{q+1}+f(x)=x^{q^r+1}+x+g(x)^2+g(x).
\end{equation}
Let us take
\[
f(x)=\sum_{j=0}^{r-1}x^{q^j},
\]
and
\[
g(x)=\sum_{j=1}^{r-1}x^{q^j}+\sum_{0\le i<j\le r-1}\sum_{s=0}^{l-1}x^{2^l(q^i+q^j)}.
\]
Then
\begin{eqnarray*}
f(x)+f(x)^{q+1}
&=&\sum_{j=0}^{r-1}x^{q^j}
+\left(x+\sum_{j=1}^{r-1}x^{q^j}\right)
\left(x^{q^r}+\sum_{j=1}^{r-1}x^{q^j}\right)\\
&=&\sum_{j=1}^{r-1}x^{q^j}
+x+x^{1+q^r}+\sum_{j=1}^{r-1}x^{1+q^j}+\sum_{j=1}^{r-1}x^{q^r+q^j}
+\sum_{j=1}^{r-1}x^{2q^j},
\end{eqnarray*}
and
\begin{eqnarray*}
g(x)^2+g(x)
&=&\sum_{j=1}^{r-1}x^{q^j}+\sum_{j=1}^{r-1}x^{2q^j}
+\sum_{0\le i<j\le r-1}(x^{q^i+q^j}+x^{q(q^i+q^j)})\\
&=&\sum_{j=1}^{r-1}x^{q^j}+\sum_{j=1}^{r-1}x^{2q^j}
+\sum_{0\le i<j\le r-1}x^{q^i+q^j}
+\sum_{1\le i<j\le r}x^{q^i+q^j}\\
&=&\sum_{j=1}^{r-1}x^{q^j}+\sum_{j=1}^{r-1}x^{2q^j}
+\sum_{j=1}^{r-1}x^{1+q^j}
+\sum_{i=1}^{r-1}x^{q^i+q^r}.
\end{eqnarray*}
Subtracting these gives~(\ref{chapman:covering-condition}).
\end{proof}

\begin{corollary}
Conjecture \ref{chapman:conjck} is true.
\end{corollary}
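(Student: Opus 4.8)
The plan is to obtain Conjecture \ref{chapman:conjck} as an immediate consequence of Theorem \ref{chapman:ckdiv} together with the Kleiman--Serre theorem (Theorem \ref{chapman:KleimanSerre}). Recall that the conjecture asserts $\mathrm{L}_{C_1}(t)\mid\mathrm{L}_{C_k}(t)$ for every positive integer $k$. The case $k=1$ is vacuous, so assume $k>1$. Since $1$ divides $k$ and $k>1$, I would apply Theorem \ref{chapman:ckdiv} with $l=1$ (so $q=2$ and $r=k$), which produces a non-constant morphism $C_k\longrightarrow C_1$ defined over $\mathbb{F}_2$; concretely it is $x\mapsto\sum_{j=0}^{k-1}x^{2^j}$, $y\mapsto y+g(x)$ with $g$ as in that theorem.

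Next I would note that a non-constant morphism of smooth projective curves is automatically finite, hence surjective, so the hypothesis of Theorem \ref{chapman:KleimanSerre} is satisfied. That theorem then yields $\mathrm{L}_{C_1}(t)\mid\mathrm{L}_{C_k}(t)$, which is precisely the statement of Conjecture \ref{chapman:conjck}. There is no genuine obstacle remaining at this stage: all the substance of the argument lies in the explicit construction of the covering map carried out in Theorem \ref{chapman:ckdiv}, and the only things to verify here are the bookkeeping points that $l=1$ is an admissible value of $l$ and that the resulting morphism is non-constant. The latter is clear because the polynomial $f(x)=\sum_{j=0}^{k-1}x^{2^j}$ has leading term $x^{2^{k-1}}$ and is therefore non-constant, so the induced map is dominant and the pullback on $H^1$ is injective. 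Hence the corollary follows.
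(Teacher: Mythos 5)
Your argument is correct and is exactly the paper's route: the authors likewise deduce the corollary by combining Theorem \ref{chapman:ckdiv} (with $l=1$) and the Kleiman--Serre divisibility result, Theorem \ref{chapman:KleimanSerre}. Your added remarks (the trivial case $k=1$, and that a non-constant morphism of smooth projective curves is finite hence surjective) are sound bookkeeping that the paper leaves implicit.
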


The Corollary follows from Theorem \ref{chapman:KleimanSerre}
and Theorem \ref{chapman:ckdiv}.

\bigskip

We used Theorem \ref{chapman:KleimanSerre} to prove Conjecture \ref{chapman:conjck}.
We remark that Theorem \ref{chapman:KleimanSerre}
cannot be used to prove Conjecture \ref{chapman:conjdk},
because it is shown in \cite{chapman:AMR} that there is no morphism
$E_2 \longrightarrow E_1$.
Thus a proof of Conjecture \ref{chapman:conjdk} will probably use different methods.

\bigskip

As a final remark, we point out where the argument of Theorem \ref{chapman:ckdiv} breaks down in
odd characteristic for the analogous curves
\[
C_k^{(p)}:\qquad y^p-y=x^{p^k+1}+x,
\]
where $p$ is an odd prime.
In the case $k=2$, $l=1$, in order
to give a morphism of the form $(x,y)\mapsto (f(x),y+g(x))$ from
$C_2^{(p)}$ to $C_1^{(p)}$
we need to find polynomials $f$ and $g$
with
\[
f(x)^{p+1}+f(x)=x^{p^2+1}+x+g(x)^p-g(x).
\]
If we take $f(x)=x+x^p$ by analogy with Theorem~\ref{chapman:ckdiv}, then we require
\[
x^{p^2+p}+x^{2p}+x^{p+1}+x^p=g(x)^p-g(x),
\]
but this is insoluble for polynomial $g$ unless $p=2$.

Notwithstanding the above, the analogous conjecture for odd $p$ does appear to be true based on computations for small $k,p$.
\begin{conj}
Let $p$ be an odd prime. Then the L-polynomial of $C_1^{(p)}$ divides the L-polynomial of $C_k^{(p)}$.
\end{conj}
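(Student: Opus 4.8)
The plan is to follow the strategy of Theorem~\ref{chapman:ckdiv} as far as it goes, and then switch tools. By Theorem~\ref{chapman:KleimanSerre} it would suffice to produce, for each prime $\ell \mid k$, a non-constant morphism $C_k^{(p)} \to C_{k/\ell}^{(p)}$ over $\mathbb{F}_p$; composing these and taking $l=1$ would give the conjecture, indeed the stronger statement that $\mathrm{L}_{C_l^{(p)}}(t) \mid \mathrm{L}_{C_k^{(p)}}(t)$ whenever $l \mid k$. The natural attempt, in analogy with~(\ref{chapman:covering-condition}), is a morphism $(x,y) \mapsto (f(x), y+g(x))$, which requires $f(x)^{q+1} + f(x) = x^{q^r+1} + x + g(x)^p - g(x)$ with $q = p^l$ and $k = lr$. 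Taking the same $f(x) = \sum_{j=0}^{r-1} x^{q^j}$ as in the even case determines what $g(x)^p - g(x)$ must equal, so the existence of $g$ reduces to whether an explicit polynomial lies in the image of the Artin--Schreier operator $\wp(h) = h^p - h$ on $\mathbb{F}_p[x]$, i.e.\ in the $\mathbb{F}_p$-span of $\{x^{pm} - x^m\}_{m \ge 1}$; this is a finite check, since a nonzero element of that span has top degree divisible by $p$, so one simply peels off leading terms. As the remark closing the previous section indicates, the $x^{p+1}$-type terms obstruct this already for $k=2$, so no morphism of this shape exists and a genuinely different device is needed.

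I would then pass to Jacobians, using Tate's theorem (the one referred to in the abstract): $\mathrm{L}_{C'}(t) \mid \mathrm{L}_C(t)$ if and only if $\Jac(C')$ is, up to isogeny over $\mathbb{F}_p$, an abelian subvariety of $\Jac(C)$, equivalently the characteristic polynomial of Frobenius on $\Jac(C')$ divides that on $\Jac(C)$. Using the $\mathbb{Z}/p$-action $y \mapsto y+1$ on $C_k^{(p)}$, one decomposes $\mathrm{L}_{C_k^{(p)}}(t)$ (and $\mathrm{L}_{C_1^{(p)}}(t)$) according to the nontrivial additive characters $\psi$ of $\mathbb{F}_p$, so that each factor is the L-function attached to the Weil sum $\sum_{x} \psi\!\bigl(\Tr(t\,x^{p^k+1} + t\,x)\bigr)$ over the extensions of $\mathbb{F}_p$. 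Completing the square in the quadratic form $x \mapsto \Tr(t\,x^{p^k+1})$, whose rank and discriminant over $\mathbb{F}_{p^n}$ are governed by $\gcd(k,n)$ in the spirit of Theorem~\ref{chapman:lmward} and its relatives, should evaluate these sums in closed form as (generalised) Gauss sums over $\mathbb{F}_p$ and its extensions; in particular both curves are supersingular. The Hasse--Davenport relation, which makes a Gauss sum over $\mathbb{F}_{p^n}$ a power (up to sign) of a Gauss sum over $\mathbb{F}_p$, should then let one see that every Frobenius eigenvalue of $C_1^{(p)}$, with multiplicity, occurs among those of $C_k^{(p)}$, which is the desired divisibility. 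A more hands-on version of the same idea: use the available small-$(p,k)$ data to guess closed forms for $\mathrm{L}_{C_1^{(p)}}(t)$ (of degree only $p(p-1)$) and $\mathrm{L}_{C_k^{(p)}}(t)$, establish them via the Gauss-sum evaluation, and check the division of the two explicit products.

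The step I expect to be the main obstacle is the multiplicity bookkeeping in this last matching. The Weil-sum computation should give $\#C_k^{(p)}(\mathbb{F}_{p^n}) = \#C_1^{(p)}(\mathbb{F}_{p^n})$ for all $n$ with $\gcd(n,k)=1$ --- the first hypothesis of Theorem~\ref{chapman:AMBCC} --- but the second hypothesis fails because the curves are supersingular, so that theorem cannot be quoted directly; supersingularity must instead be made to do the extra work, by identifying the eigenvalues as specific Gauss sums rather than merely controlling their power sums, and it is the behaviour of the repeated factors in the decomposition by $\psi$ that is delicate. If one would rather have an honest morphism, the only remaining avenue I see is the automorphism group of $C_k^{(p)}$ in odd characteristic together with the Kani--Rosen theorem; but a short check of automorphisms of the form $(x,y) \mapsto (x+a, y+B(x))$ suggests that the extra involution $\phi$ available when $p=2$ has no odd-$p$ analogue --- the resulting degree-one term cannot be absorbed into $\wp$ --- so this looks unpromising.
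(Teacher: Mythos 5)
You should first note that the statement you are proving is stated in the paper as a \emph{conjecture}: the authors explicitly do not prove it, they only remark (as you rediscover in your first paragraph) that the morphism construction of Theorem~\ref{chapman:ckdiv} breaks down for odd $p$ already at $k=2$, $l=1$, and that the conjecture is supported by machine computation for small $k,p$. So there is no proof in the paper to match yours against, and the question is whether your proposal closes the gap on its own. It does not: it is a strategy outline whose decisive step is acknowledged but not carried out.

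Concretely, the gap is exactly where you say you expect it, and it is not a bookkeeping detail but the whole content of the conjecture. Tate's theorem reduces the divisibility $\mathrm{L}_{C_1^{(p)}}(t)\mid \mathrm{L}_{C_k^{(p)}}(t)$ to showing that every Frobenius eigenvalue of $C_1^{(p)}$ occurs among those of $C_k^{(p)}$ \emph{with at least the same multiplicity} (equivalently, that each irreducible factor of the characteristic polynomial of Frobenius divides with the right exponent). The character decomposition under $y\mapsto y+1$, the evaluation of the resulting Weil sums $\sum_x\psi\bigl(a\,\Tr(x^{p^k+1}+x)\bigr)$ via the quadratic form $\Tr(a\,x^{p^k+1})$, and Hasse--Davenport all tend to show that the eigenvalues of both curves lie in a very restricted set (roots of unity times $\sqrt{p}$, i.e.\ supersingularity), and can give equality of point counts over $\mathbb{F}_{p^n}$ for $\gcd(n,k)=1$; but, as you yourself observe, that is precisely the situation where Theorem~\ref{chapman:AMBCC} fails because of repeated roots, so equality of power sums over a cofinite set of degrees does not pin down the multiplicities of individual eigenvalues. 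Nothing in your sketch computes the multiplicity with which a given Gauss-sum eigenvalue of $C_1^{(p)}$ appears in $\mathrm{L}_{C_k^{(p)}}(t)$ (these multiplicities depend on counting solutions of the relevant degenerate/nondegenerate cases of the quadratic form over each $\mathbb{F}_{p^n}$, and on descending the resulting factorization from an extension field back to $\mathbb{F}_p$, where distinct Weil numbers can be conjugate and factors can recombine). Until that count is done and the comparison made, the argument establishes nothing beyond what the paper already records, and the statement remains a conjecture; your closing observations (no morphism of the proposed shape, no odd-$p$ analogue of the extra involution for a Kani--Rosen argument) are consistent with the paper but only confirm that the known methods do not apply.
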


\paragraph{Acknowledgment}
I.B.--C., S.F.\ and G.M.\ are supported by Science Foundation Ireland grant 13/IA/1914 and are members of the Computational and Adaptive Systems Laboratory (CASL) in University College Dublin. I.B.--C.\ is a member of the MICINN project MTM2010-17389. S.F.\ is partially supported by an Irish Department of Education scholarship.

\end{document}